\documentclass[11pt]{article}

\usepackage{amsmath}
\usepackage{amsthm}
\usepackage{amssymb}
\usepackage{latexsym}
\usepackage{pstricks}
\usepackage{graphicx, pst-plot, pst-node, pst-text, pst-tree}
\usepackage{titlefoot}
\usepackage{titlesec}
\usepackage[small,it]{caption}
\setlength{\captionmargin}{0.4in}
\setlength{\abovecaptionskip}{-5pt}

\usepackage{color}
\definecolor{lightgray}{rgb}{0.8, 0.8, 0.8}
\definecolor{darkgray}{rgb}{0.7, 0.7, 0.7}
\definecolor{darkblue}{rgb}{0, 0, .4}

\usepackage[bookmarks,breaklinks]{hyperref}
\hypersetup{
        colorlinks=true,
        linkcolor=darkblue,
        anchorcolor=darkblue,
        citecolor=darkblue,
        pagecolor=darkblue,
        urlcolor=darkblue,
        pdfpagemode=UseThumbs,
        pdftitle={On partial well-order for monotone grid classes of permutations},
        pdfsubject={Combinatorics},
        pdfauthor={Vincent Vatter and Steve Waton},
}

\newtheorem{theorem}{Theorem}[section]
\newtheorem{proposition}[theorem]{Proposition}

\newtheorem{corollary}[theorem]{Corollary}

\newtheorem{question}[theorem]{Question}

\newcommand{\C}{\mathcal{C}}

\newcommand{\M}{\mathcal{M}}

\newcommand{\Grid}{\operatorname{Grid}}
\newcommand{\zpm}{0/\mathord{\pm} 1}


\newcounter{todocounter}

\setlength{\textwidth}{6in}
\setlength{\textheight}{8in}
\setlength{\topmargin}{0in}
\setlength{\headsep}{0.25in}
\setlength{\headheight}{0.25in}
\setlength{\oddsidemargin}{0.25pt}
\setlength{\evensidemargin}{0.25pt}
\makeatletter
\newfont{\footsc}{cmcsc10 at 8truept}
\newfont{\footbf}{cmbx10 at 8truept}
\newfont{\footrm}{cmr10 at 10truept}
\pagestyle{plain}

\renewenvironment{abstract}%
                {
                  \begin{list}{}%
                     {\setlength{\rightmargin}{1in}%
                      \setlength{\leftmargin}{1in}}%
                   \item[]\ignorespaces\begin{small}}%
                 {\end{small}\unskip\end{list}}

\datefoot{\today}
\amssubj{06A07 (primary) and 05A05 (secondary)}
\keywords{grid class, partial well-order, permutation class, restricted permutation}

\newpagestyle{main}[\small]{
        \headrule
        \sethead[\usepage][][]
        {\sc On Partial Well-Order For Monotone Grid Classes of Permutations}{}{\usepage}}

\title{\sc{On Partial Well-Order For Monotone Grid Classes of Permutations}}
\author{%
Vincent Vatter\footnote{This research was conducted while V. Vatter was a member of the School of Mathematics and Statistics at the University of St Andrews, supported by EPSRC grant GR/S53503/01.}\\[-0.25ex]
\small Department of Mathematics\\[-0.5ex]
\small Dartmouth College\\[-0.5ex]
\small Hanover, New Hampshire USA\\[1.5ex]
Steve Waton\\[-0.25ex]
\small Department of Mathematics and Statistics\\[-0.5ex]
\small University of St Andrews\\[-0.5ex]
\small St Andrews, Fife, Scotland\\[-10pt]
}

\titleformat{\section}
        {\large\sc}
        {\thesection.}{1em}{}   

\date{}

\begin{document}
\maketitle

\pagestyle{main}

\begin{abstract}
A monotone grid class is a permutation class (i.e., a downset of permutations under the containment order) defined by local monotonicity conditions.  We give a simplified proof of a result of Murphy and Vatter that monotone grid classes of forests are partially well-ordered.  
\end{abstract}

\section{Introduction}\label{grid-pwo-intro}

The permutation $\pi$ of $[n]=\{1,2,\dots,n\}$ {\it contains\/} the permutation $\sigma$ of $[k]=\{1,2,\dots,k\}$ (written $\sigma\le\pi$) if $\pi$ has a subsequence of length $k$ order isomorphic to $\sigma$.  For example, $\pi=391867452$ (written in list, or one-line notation) contains $\sigma=51342$, as can be seen by considering the subsequence $91672$ ($=\pi(2),\pi(3),\pi(5),\pi(6),\pi(9)$).  A {\it permutation class\/} is a downset of permutations under this order; thus if $\C$ is a permutation class, $\pi\in\C$, and $\sigma\le\pi$, then $\sigma\in\C$.  Our interest here is in particular permutation classes called monotone grid classes.

Given a permutation $\pi$ of length $n$ and sets $X,Y\subseteq[n]$, we write $\pi(X\times Y)$ for the permutation that is order isomorphic to the subsequence of $\pi$ with indices from $X$ and values in $Y$.  For example, to compute $136854792([5,9]\times[1,5])$ we consider the subsequence of entries in indices $5$ through $9$, $54792$, which have values between $1$ and $5$; in this case the subsequence is $542$, so $136854792([5,9]\times[1,5])=321$.

\def\symbolfootnote[#1]#2{\begingroup%
\def\thefootnote{\fnsymbol{footnote}}\footnote[#1]{#2}\endgroup}
Suppose that $\M$ is a $t\times u$ matrix%
\symbolfootnote[2]{In order for the cells of a matrix to line up with the corresponding sections in a gridded permutation, we index matrices beginning from the lower left-hand corner and we reverse the rows and columns, so $\M_{3,2}$ denotes for us the entry of $\M$ in the $3$rd column from the left and $2$nd row from the bottom.  Therefore, a $t\times u$ matrix, for our purposes, has $t$ columns and $u$ rows.}
whose entries lie in $\{0,\pm 1\}$.  An {\it $\M$-gridding\/} of the permutation $\pi$ of length $n$ is a pair of sequences $1=c_1\le\cdots\le c_{t+1}=n+1$ (the column divisions) and $1=r_1\le\cdots\le r_{u+1}=n+1$ (the row divisions) such that $\pi([c_k,c_{k+1})\times[r_\ell,r_{\ell+1}))$ is increasing if $\M_{k,\ell}=1$, decreasing if $\M_{k,\ell}=-1$, and empty if $\M_{k,\ell}=0$.  We refer to a permutation together with an $\M$-gridding as an {\it $\M$-gridded permutation\/}.  Figure~\ref{fig-graph-grid-136854792} shows an example.

The {\it grid class of $\M$\/}, written $\Grid(\M)$, consists of all permutations which possess an $\M$-gridding.  Here we have simply defined monotone grid classes; Brignall~\cite{brignall:grid-classes-an:} and Vatter~\cite{vatter:small-permutati:} study more general grid classes.

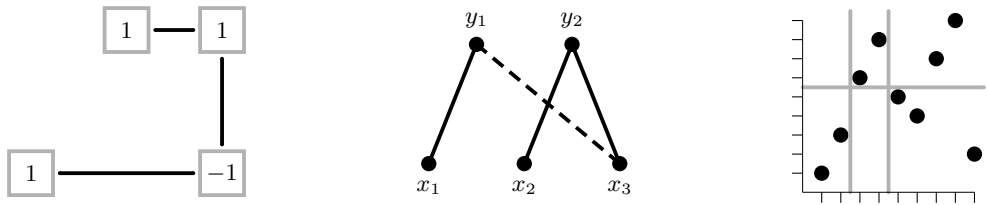
\begin{figure}[t]
\begin{footnotesize}
\begin{center}
\begin{tabular}{ccccc}
\psset{xunit=0.025in, yunit=0.025in}
\psset{linewidth=0.005in}
\begin{pspicture}(0,-3)(50,40)
\psframe[linecolor=darkgray,linewidth=0.02in](0,0)(10,10)
\rput[c](5,5){$1$}
\psframe[linecolor=darkgray,linewidth=0.02in](40,0)(50,10)
\rput[c](45,5){$-1$}
\psframe[linecolor=darkgray,linewidth=0.02in](40,30)(50,40)
\rput[c](45,35){$1$}
\psframe[linecolor=darkgray,linewidth=0.02in](20,30)(30,40)
\rput[c](25,35){$1$}
\psline[linecolor=black,linewidth=0.02in]{c-c}(11,5)(39,5)
\psline[linecolor=black,linewidth=0.02in]{c-c}(45,11)(45,29)
\psline[linecolor=black,linewidth=0.02in]{c-c}(31,35)(39,35)
\end{pspicture}
&\rule{0.5in}{0pt}&
\psset{xunit=0.025in, yunit=0.025in}
\psset{linewidth=0.005in}
\begin{pspicture}(0,0)(50,40)
\pscircle*(5,10){0.04in}
\pscircle*(25,10){0.04in}
\pscircle*(45,10){0.04in}
\pscircle*(15,35){0.04in}
\pscircle*(35,35){0.04in}
\rput[c](5,5){$x_1$}
\rput[c](25,5){$x_2$}
\rput[c](45,5){$x_3$}
\rput[c](15,40){$y_1$}
\rput[c](35,40){$y_2$}
\psline[linecolor=black,linewidth=0.02in](5,10)(15,35) 
\psline[linecolor=black,linewidth=0.02in,linestyle=dashed](45,10)(15,35) 
\psline[linecolor=black,linewidth=0.02in](25,10)(35,35) 
\psline[linecolor=black,linewidth=0.02in](45,10)(35,35) 
\end{pspicture}
&\rule{0.5in}{0pt}&
\psset{xunit=0.01in, yunit=0.01in}
\psset{linewidth=0.005in}
\begin{pspicture}(0,-10)(90,100)
\pscircle*(10,10){0.04in}
\pscircle*(20,30){0.04in}
\pscircle*(30,60){0.04in}
\pscircle*(40,80){0.04in}
\pscircle*(50,50){0.04in}
\pscircle*(60,40){0.04in}
\pscircle*(70,70){0.04in}
\pscircle*(80,90){0.04in}
\pscircle*(90,20){0.04in}
\psline[linecolor=darkgray,linestyle=solid,linewidth=0.02in]{c-c}(0,55)(95,55)
\psline[linecolor=darkgray,linestyle=solid,linewidth=0.02in]{c-c}(25,0)(25,95)
\psline[linecolor=darkgray,linestyle=solid,linewidth=0.02in]{c-c}(45,0)(45,95)
\psaxes[dy=10,Dy=1,dx=10,Dx=1,tickstyle=bottom,showorigin=false,labels=none](0,0)(90,90)
\end{pspicture}
\end{tabular}
\end{center}
\end{footnotesize}
\caption[The cell graph (left) and row-column graph (center) of a matrix of permutation classes.]{The cell (left) and row-column (center) graph of the matrix $\left(
\begin{array}{ccc}
&1&1\\
1&&-1
\end{array}
\right)
$, and, on the right, a gridding of the permutation $136854792$ for this matrix; the column divisions are $c_1,c_2,c_3,c_4=1,3,5,10$ while the row divisions are $r_1,r_2,r_3=1,6,10$.  Here and in what follows, we suppress $0$ entries from our matrices, and in the row-column graph in the center, denote positive edges by solid lines and negative edges by dashed lines.}\label{fig-graph-grid-136854792}
\end{figure}

Monotone grid classes were introduced to study infinite antichains of permutations.  Recall that a poset is said to be {\it partially well-ordered (pwo)\/} if it contains neither an infinite strictly descending chain (which no permutation class can contain) nor an infinite antichain (a set of pairwise incomparable elements).  The pwo properties of a monotone grid class are determined by a certain bipartite graph; the {\it row-column graph\/} of a $t\times u$ matrix $\M$ is the bipartite graph on the vertices $x_1,\dots,x_t,y_1,\dots,y_u$ where $x_i\sim y_j$ if and only if $\M_{i,j}\neq 0$.

\begin{theorem}[Murphy and Vatter~\cite{murphy:profile-classes:}]\label{pwo-forest}
For a $\zpm$ matrix $\M$, $\Grid(\M)$ is pwo if and only if the row-column graph of $\M$ is a forest.
\end{theorem}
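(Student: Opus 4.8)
The plan is to prove the two directions separately, with the ``if'' direction (forest $\Rightarrow$ pwo) carrying essentially all of the difficulty. For the contrapositive of the ``only if'' direction I would suppose the row-column graph of $\M$ contains a cycle and exhibit an infinite antichain inside $\Grid(\M)$. A cycle in the bipartite row-column graph is a closed walk alternating between column-vertices $x_i$ and row-vertices $y_j$, and each of its edges is a nonzero cell, so geometrically it is a cyclic chain of cells in which consecutive cells share either a row or a column. The prototype is the $2\times 2$ all-ones matrix, whose row-column graph is the $4$-cycle and whose grid class is classically known not to be pwo. I would generalize that construction: thread a long ``oscillating'' sequence of points around the cycle of cells, using each shared row or column to force the oscillation to continue, and then argue that two such permutations of different lengths are incomparable, since any containment would have to preserve the oscillation pattern cell-by-cell and thereby match lengths. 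This yields the required infinite antichain and shows $\Grid(\M)$ is not pwo.

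For the ``if'' direction, assume the row-column graph is a forest. I would encode each gridded permutation of $\Grid(\M)$ by a finite rooted forest whose vertices carry labels from a fixed finite set, the label of a vertex recording which cell its point lies in and how that point is interleaved with the points of the neighbouring ``parent'' cell along the chosen rooting. The acyclicity of the row-column graph is exactly what makes this possible: in a forest one can root each component and give every point a well-defined parent in the adjacent cell nearer the root, whereas a cycle would close the dependencies up and block any consistent rooting. The encoding must be designed so that a label-respecting topological embedding of one encoding-forest into another forces the corresponding containment of the underlying permutations. Granting that, Kruskal's tree theorem (together with Higman's lemma to pass from trees to forests and to handle the finite label set) shows the labeled encoding-forests are partially well-ordered under embedding, and the order-reflection property transfers this to $\Grid(\M)$.

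The crux, and where I expect the real work to lie, is constructing this encoding and verifying order-reflection: I must arrange the parent-child structure on points so that it is genuinely tree-shaped, and then check that an abstract forest embedding can be realized as a simultaneous choice of compatible monotone subsequences in every cell at once. An alternative to invoking Kruskal directly is an inductive \emph{leaf-peeling} argument, which may be the true simplification: a leaf of the forest corresponds to a row or column meeting just one nonzero cell, deleting it yields a smaller forest whose grid class is pwo by induction, and one shows that rebuilding $\Grid(\M)$ amounts to inserting a single monotone strip in a controlled way, so that pwo is preserved by Higman's lemma applied to that strip. Either route reduces the theorem to a standard well-quasi-order result; the main obstacle in both is making the bookkeeping of the cell-by-cell interleavings precise enough that the quasi-order on codes dominates containment of permutations.
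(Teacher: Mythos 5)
Note first that the paper proves only the ``forest $\Rightarrow$ pwo'' half of the theorem, quoting Murphy and Vatter for the antichain construction, so the substantive comparison concerns your ``if'' direction. There your plan --- encode each gridded permutation as a labeled forest rooted along the row-column tree and invoke Kruskal, or else peel leaves inductively and apply Higman to one monotone strip at a time --- is essentially the shape of the \emph{original} Murphy--Vatter argument, and the step you explicitly defer (``constructing this encoding and verifying order-reflection'') is exactly the bookkeeping the present paper was written to eliminate. The paper's route needs no trees at all: since every two vertices of a tree are joined by a unique path, forestness makes $\M$ a \emph{partial multiplication matrix}, i.e., one can choose signs $c_1,\dots,c_t$ and $r_1,\dots,r_u$ with every nonzero entry equal to $c_kr_\ell$. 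These signs fix a reading direction in each row and column, giving a map $\varphi:\Sigma^*\rightarrow\Grid(\M)$ from words over the \emph{finite} alphabet $\Sigma$ of nonzero cells; order-preservation is immediate, and surjectivity reduces to a single clean lemma: the union of the row orders and column orders on the entries of a gridded $\pi$ is acyclic, because a $2$-cycle would contradict the sign structure and any longer cycle would force a cycle in the row-column graph. A linear extension then produces a preimage word, and Higman's theorem for a finite alphabet, transported through an onto order-preserving map, finishes. So the paper's key insight --- one global linear insertion order valid for all cells simultaneously --- replaces both of your proposed routes with strictly lighter machinery. Two technical cautions about your version: what you call ``order-reflection'' should be the combination of (i) embedding of codes implies containment of permutations and (ii) every permutation has a code (an onto order-preserving map, as in the paper's Proposition~\ref{prop-hom-pwo}); and if you do go through Kruskal, the standard (unordered) topological-embedding version does not preserve sibling order, on which your interleaving bookkeeping depends, so you would need the ordered-tree variant or extra labels. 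As written, the ``if'' direction is a plausible plan whose acknowledged crux is unexecuted.

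Your ``only if'' sketch contains a concrete error: plain oscillations threaded around the cycle of cells of different lengths are \emph{comparable}, not incomparable --- a shorter winding embeds in a longer one simply by winding fewer times --- so the claim that ``any containment would have to preserve the oscillation pattern cell-by-cell and thereby match lengths'' fails. The known antichain constructions (as in Murphy--Vatter) modify the two ends of each spiral with anchoring entries that rigidify every embedding, and one must additionally argue against \emph{all} embeddings, not just gridding-respecting ones, since containment of permutations carries no canonical gridding. Because the paper cites this direction rather than proving it, the gap does not affect the comparison with the paper's text, but it would need repair in any self-contained proof of the full theorem.
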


In Section~\ref{grid-pwo-pwo} we give a streamlined proof of half of Theorem~\ref{pwo-forest}, showing that $\Grid(\M)$ is pwo whenever the row-column graph of $\M$ is a forest.

It is often more intuitive to consider a different graph, the {\it cell graph\/} of $\M$.  This is the graph on the vertices $\{(i,j) : \M_{i,j}\neq\emptyset\}$ in which $(i,j)\sim(k,\ell)$ if $(i,j)$ and $(k,\ell)$ share a row or a column and there are no nonzero cells between them in this row or column.  We label the vertex $(k,\ell)$ in this graph by the entry $\M_{k,\ell}$.  Figure~\ref{fig-graph-grid-136854792} shows the cell graph of a matrix.  The distinction between cell and row-column graphs does not affect Theorem~\ref{pwo-forest}:

\begin{proposition}
The row-column graph of the $\zpm$ matrix $\M$ is a forest if and only if the cell graph of $\M$ is a forest.
\end{proposition}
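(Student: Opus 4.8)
The plan is to reduce the statement to an equality of circuit ranks. Recall that a graph with $V$ vertices, $E$ edges, and $C$ connected components has circuit rank $E-V+C\ge 0$, and that it is a forest precisely when this quantity vanishes. I would therefore compute the circuit ranks of the row-column graph and the cell graph of $\M$ and show that they coincide; since both are nonnegative, one graph is a forest if and only if the other is.

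First I would fix the relevant counts. Let $N$ denote the number of nonzero entries of $\M$. This is simultaneously the number of edges of the row-column graph (each nonzero cell $\M_{i,j}$ contributes the edge $x_i\sim y_j$) and the number of vertices of the cell graph. Writing $n_i$ for the number of nonzero cells in column $i$ and $m_j$ for the number in row $j$, and letting $t'$ and $u'$ count the nonempty columns and rows respectively, we have $\sum_i n_i=\sum_j m_j=N$. Isolated vertices contribute equally to $V$ and to $C$ and so do not change a circuit rank, which lets me discard them throughout.

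Next I would tabulate the two circuit ranks. The row-column graph, restricted to its non-isolated vertices, has $t'+u'$ vertices and $N$ edges, giving circuit rank $N-t'-u'+C_{\mathrm{rc}}$, where $C_{\mathrm{rc}}$ is its number of components. For the cell graph, the nonzero cells of a fixed nonempty column form a path when sorted by row index, contributing $n_i-1$ edges, and likewise each nonempty row contributes $m_j-1$ edges; no edge is counted twice, since distinct cells cannot share both a row and a column. Hence the cell graph has $\sum_i(n_i-1)+\sum_j(m_j-1)=2N-t'-u'$ edges on $N$ vertices, giving circuit rank $N-t'-u'+C_{\mathrm{cell}}$. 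The two circuit ranks therefore agree exactly when $C_{\mathrm{rc}}=C_{\mathrm{cell}}$.

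The main point, then, is to show $C_{\mathrm{rc}}=C_{\mathrm{cell}}$, and I would prove the sharper statement that two nonzero cells lie in the same component of the cell graph if and only if, viewed as edges $x_i\sim y_j$ and $x_k\sim y_\ell$, they lie in the same component of the row-column graph; summing over components yields the equality of counts. One direction is immediate: adjacent cells in the cell graph share a row or a column, so their endpoints already lie in a common component of the row-column graph, and this propagates along cell-graph paths. For the converse, a path in the row-column graph joining the two edges consists of successive edges sharing an $x$- or a $y$-vertex; edges sharing $x_i$ are cells in column $i$ and edges sharing $y_j$ are cells in row $j$, so in either case the cells are joined by a path in the cell graph, and the whole row-column path lifts to a connection there. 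I expect this bookkeeping---confirming that ``connected via shared rows and columns'' means the same thing in both graphs---to be the only real content, the edge and vertex counts being mechanical once it is in hand.
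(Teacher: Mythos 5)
Your argument is correct, but it takes a genuinely different route from the paper's. The paper works locally with cycles: a cycle $x_{k_1}\sim y_{\ell_1}\sim\cdots\sim x_{k_m}\sim y_{\ell_m}\sim x_{k_1}$ in the row-column graph yields nonzero cells $(k_1,\ell_1),(k_2,\ell_1),\dots,(k_m,\ell_m),(k_1,\ell_m)$ that need not be pairwise adjacent in the cell graph, but become a cycle there once the intervening nonzero cells are inserted; conversely, a cell-graph cycle projects back to a row-column cycle when such intermediate cells are removed. You instead compare the two graphs globally via the circuit rank $E-V+C$: with $N$ nonzero cells, both graphs have circuit rank $N-t'-u'+C$, your edge count $2N-t'-u'$ for the cell graph is right (the nonzero cells of a line form a path, and no edge is counted twice since distinct cells cannot share both a row and a column), and your component correspondence --- cells lie in a common cell-graph component iff the corresponding edges lie in a common row-column component --- is argued soundly in both directions, which is exactly what reduces the claim to $C_{\mathrm{rc}}=C_{\mathrm{cell}}$. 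Your route costs more bookkeeping but proves strictly more: the cyclomatic numbers of the two graphs always coincide and their components are in canonical bijection, not merely that one graph is acyclic iff the other is. It also sidesteps a point the paper's proof leaves implicit: inserting intermediate cells a priori produces only a closed walk (an inserted cell can lie on both a row segment and a column segment of the tour), from which a simple cycle must still be extracted, whereas your counting argument never needs to exhibit a cycle at all.
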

\begin{proof}
Consider a cycle $x_{k_1}\sim y_{\ell_1}\sim \cdots\sim x_{k_m}\sim y_{\ell_m}\sim x_{k_1}$ in the row-column graph of $\M$ (as this graph is bipartite, such a cycle must be of even length and alternate between $x$ and $y$ vertices).  By definition, this means that $\M_{k_1,\ell_1},\dots,\M_{k_m,\ell_m},\M_{k_1,\ell_m}\neq 0$.  This does not necessarily mean that $(k_1,\ell_1)\sim\cdots\sim(k_m,\ell_m)\sim(k_1,\ell_m)\sim(k_1,\ell_1)$ in the cell graph, because there may be nonempty cells in between these cells, but by including these cells we find a cycle in the cell graph of $\M$.  In the other direction, such cells may similarly be removed.
\end{proof}

\section{Partial Multiplication Matrices}\label{grid-pwo-part-mult}

We say that the $t\times u$ $\zpm$ matrix $\M$ is a {\it partial multiplication matrix\/} if we can choose {\it row signs\/} $r_1,\dots,r_u\in\{\pm 1\}$ and {\it column signs\/} $c_1,\dots,c_t\in\{\pm1\}$ such that for all cells $(k,\ell)\in[t]\times [u]$, $\M_{k,\ell}$ is equal to either $c_kr_\ell$ or $0$.  Because there is a unique path between every two vertices of a tree, $\M$ is a partial multiplication matrix whenever $G_\M$ is a forest.

In order to characterize all the partial multiplication matrices, we need to make a definition.  A cycle $x_{k_1}\sim y_{\ell_1}\sim \cdots\sim x_{k_m}\sim y_{\ell_m}\sim x_{k_1}$ in the row-column graph of the $\zpm$ matrix $\M$ is {\it positive\/} (resp., {\it negative\/}) if $\M_{k_1,\ell_1}\M_{k_2,\ell_1}\M_{k_2,\ell_2}\cdots\M_{k_m,\ell_m}\M_{k_1,\ell_m}=1$ (resp., $-1$).

\begin{proposition}\label{part-mult-characterization}
The $\zpm$ matrix $\M$ can be expressed as a partial multiplication matrix if and only if its row-column graph contains no negative cycles.
\end{proposition}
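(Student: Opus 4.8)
The plan is to recognize this as the classical balance criterion for signed graphs and to argue both directions directly. First I would dispatch the easy forward implication. Suppose $\M$ is a partial multiplication matrix, so there are row signs $r_1,\dots,r_u$ and column signs $c_1,\dots,c_t$ with $\M_{k,\ell}=c_kr_\ell$ on every nonzero cell. Take any cycle $x_{k_1}\sim y_{\ell_1}\sim\cdots\sim x_{k_m}\sim y_{\ell_m}\sim x_{k_1}$ and substitute $\M_{k_i,\ell_j}=c_{k_i}r_{\ell_j}$ into the defining product $\M_{k_1,\ell_1}\M_{k_2,\ell_1}\M_{k_2,\ell_2}\cdots\M_{k_m,\ell_m}\M_{k_1,\ell_m}$. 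Each column sign $c_{k_i}$ and each row sign $r_{\ell_j}$ then appears exactly twice, once for each of the two cycle edges incident to the corresponding vertex, so the product collapses to a product of squares and equals $1$. Hence every cycle is positive, and $G_\M$ has no negative cycle.

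For the converse I would construct the signs explicitly, working one connected component of $G_\M$ at a time; vertices lying in no edge (all-zero rows and columns) may be given sign $+1$ arbitrarily. Fix a component and choose a spanning tree $T$. Root $T$ at any vertex, assign it sign $+1$, and propagate outward along the tree edges: whenever an edge $x_k\sim y_\ell$ of $T$ joins an already-signed vertex to a new one, define the new sign so that $c_kr_\ell=\M_{k,\ell}$. This is possible and forced, since all values are $\pm1$ and division by $\pm1$ is multiplication. The procedure assigns a sign to every vertex of the component and, by construction, realizes $\M_{k,\ell}=c_kr_\ell$ on every tree edge.

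The crux is verifying the defining equation on the non-tree edges. Given such an edge $x_k\sim y_\ell$, consider the fundamental cycle it forms with the unique $T$-path $x_k=v_0\sim v_1\sim\cdots\sim v_s=y_\ell$. Along this path every edge label equals the product of the signs of its two endpoints, so the telescoping product of the path labels equals $c_kr_\ell$, each interior vertex sign appearing squared and hence cancelling. Multiplying by the label $\M_{k,\ell}$ of the closing edge, the product of all edge labels around the cycle is $\M_{k,\ell}\,c_kr_\ell$. By hypothesis this cycle is positive, so $\M_{k,\ell}\,c_kr_\ell=1$, whence $\M_{k,\ell}=c_kr_\ell$ as required. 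The main obstacle is precisely this telescoping-and-positivity bookkeeping, but once it is set up cleanly the rest is routine; the whole argument is Harary's theorem on balanced signed graphs translated into the language of $\M$.
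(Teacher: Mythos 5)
Your argument is correct, and its converse genuinely differs from the paper's in how it organizes the bookkeeping. The forward direction is identical to the paper's: substitute $\M_{k,\ell}=c_kr_\ell$ around the cycle and note each sign appears twice. For the converse, the paper fixes a base vertex $x_1$ in each component and defines $c_k$ (resp.\ $r_\ell$) as the sign of \emph{any} path from $x_1$ to $x_k$ (resp.\ $y_\ell$); this rests on the claim that all paths between two fixed vertices have the same sign, which the paper derives from the internally-disjoint case with only the remark that the general case ``follows'' --- a step that in fact needs an induction or symmetric-difference argument, since two overlapping paths do not directly bound a simple cycle. (A similar caveat applies to the paper's final verification, where extending a path from $x_1$ to $x_k$ by the edge $x_k\sim y_\ell$ need not yield a simple path if $y_\ell$ already lies on it.) Your spanning-tree formulation sidesteps these issues: signs are forced by propagation along tree edges, and positivity is invoked only for fundamental cycles of non-tree edges, which are simple by construction, so the hypothesis applies verbatim and the telescoping computation $\M_{k,\ell}\,c_kr_\ell=1$ closes the argument cleanly. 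What the paper's route buys is brevity --- no tree to choose and a one-line verification of $r_\ell=c_k\M_{k,\ell}$ --- while your route buys airtight rigor at the modest cost of the explicit fundamental-cycle calculation; both are, as you observe, standard proofs of Harary's balance criterion for signed graphs transcribed into the matrix language.
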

\begin{proof}
Suppose first that $\M$ is a partial multiplication matrix with respect to the row signs $r_1,\dots,r_u$ and column signs $c_1,\dots,c_t$ and consider a cycle $x_{k_1}\sim y_{\ell_1}\sim \cdots\sim x_{k_m}\sim y_{\ell_m}\sim x_{k_1}$ in its row-column graph.  The parity of this cycle is
\begin{eqnarray*}
\M_{k_1,\ell_1}\M_{k_2,\ell_1}\M_{k_2,\ell_2}\cdots\M_{k_m,\ell_m}\M_{k_1,\ell_m}
&=&
(c_{k_1}r_{\ell_1})(c_{k_2}r_{\ell_1})(c_{k_2}r_{\ell_2})\cdots(c_{k_m}r_{k_m})(c_{k_1}r_{\ell_m}),\\
&=&
c_{k_1}^2r_{\ell_1}^2c_{k_2}^2r_{\ell_2}^2\cdots c_{k_m}^2r_{\ell_m}^2,\\
&=&
1,
\end{eqnarray*}
as desired.

For the other direction, suppose that $\M$ is a $t\times u$ $\zpm$ matrix whose row-column graph contains only cycles of positive sign.  This ensures that any two disjoint paths between two vertices have the same sign, from which it follows that any two paths between two vertices have the same sign.  It suffices to prove the claim for every connected component of the row-column graph of $\M$, so we assume that the row-column graph of $\M$ is connected.

For each $k\in[t]$, let $c_k$ denote the sign of any (and hence every) path from $x_1$ to $x_k$, and for each $\ell\in[u]$, let $r_\ell$ denote the sign of any path from $x_1$ to $y_\ell$.  Since we have observed that any two paths from $x_1$ to another vertex have the same sign, $c_k$ and $r_\ell$ are well-defined.  It remains only to show that $\M_{k,\ell}=c_kr_\ell$ if $\M_{k,\ell}\neq 0$.  Take any path from $x_1$ to $x_k$ (which has sign $c_k$).  If $\M_{k,\ell}\neq 0$, then $x_k\sim y_\ell$ in the row-column graph of $\M$, so we may extend this path to $x_\ell$.  The sign of the resulting path is $r_\ell$ by definition, so $r_\ell=c_k\M_{k,\ell}$, which shows that $c_kr_\ell=c_k^2\M_{k,\ell}=\M_{k,\ell}$.
\end{proof}

\section{Order-Preserving Maps and Partial Well-Order}\label{grid-pwo-pwo}

We phrase our proof that monotone grid classes of forests are pwo in the language of order preserving maps; recall that the map $\varphi:P\rightarrow Q$ is {\it order-preserving\/} if $x\le y$ in $P$ implies that $\varphi(x)\le\varphi(y)$.  By the following observation, partial well-order is preserved by onto order-preserving maps.

\begin{proposition}\label{prop-hom-pwo}
Let $P$ and $Q$ be posets.  If $\varphi:P\rightarrow Q$ is order-preserving and onto and $P$ is pwo, then $Q$ is pwo as well.
\end{proposition}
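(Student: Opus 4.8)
The plan is to reduce everything to the standard characterization of partial well-order in terms of \emph{good} sequences. Recall that a poset $R$ is pwo exactly when every infinite sequence $x_1, x_2, \dots$ of elements of $R$ is good, meaning that there are indices $i<j$ with $x_i \le x_j$. First I would record this equivalence for completeness. If some infinite sequence is \emph{bad} (admitting no such pair), then a Ramsey argument produces a forbidden configuration: three-color each pair $\{i,j\}$ with $i<j$ according to whether $x_i \le x_j$, $x_i > x_j$, or $x_i$ and $x_j$ are incomparable, and extract an infinite monochromatic set; the first color cannot occur in a bad sequence, so the monochromatic set is either an infinite strictly descending chain or an infinite antichain, and hence $R$ is not pwo. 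Conversely, an infinite descending chain or an infinite antichain is visibly bad, so a pwo poset has only good sequences.

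With this characterization in hand the transfer is short. Let $q_1, q_2, \dots$ be an arbitrary infinite sequence in $Q$. Since $\varphi$ is onto, for each $n$ I can choose some $p_n \in P$ with $\varphi(p_n) = q_n$, which gives an infinite sequence in $P$. Because $P$ is pwo, this sequence is good, so there exist $i<j$ with $p_i \le p_j$. As $\varphi$ is order-preserving, applying it yields $q_i = \varphi(p_i) \le \varphi(p_j) = q_j$, so the original sequence in $Q$ is good as well. Since it was arbitrary, every infinite sequence in $Q$ is good, and therefore $Q$ is pwo.

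The one real subtlety is to resist working directly with the two forbidden configurations. An order-preserving map preserves $\le$ in the forward direction but need not reflect it, so lifting an infinite antichain or descending chain from $Q$ to $P$ conveys no information on its own, as the preimages could be comparable in any pattern. The good-sequence formulation is precisely what makes the forward-only behavior of $\varphi$ sufficient, with surjectivity used only to supply the preimages $p_n$. I expect the most delicate point to be packaging the Ramsey equivalence cleanly; once it is available, the remaining argument is essentially two lines.
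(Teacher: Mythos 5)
Your proof is correct, but it takes a different route from the paper: you go through the good-sequence (well-quasi-order) characterization of pwo, proved via Ramsey, while the paper argues directly on the two forbidden configurations, lifting an antichain $A\subseteq Q$ to chosen pre-images in $P$, observing that these pre-images again form an antichain, and concluding $A$ is finite, with descending chains treated ``similarly.'' One claim in your closing aside deserves correction: you assert that lifting an infinite antichain from $Q$ to $P$ ``conveys no information on its own'' because $\varphi$ need not reflect $\le$; in fact no reflection is needed for antichains. If $a$ and $b$ are incomparable in $Q$ and some pre-images satisfy $p_a\le p_b$ in $P$, then order-preservation gives $a=\varphi(p_a)\le\varphi(p_b)=b$, a contradiction, so pre-images of an antichain automatically form an antichain --- this is exactly the paper's one-line argument. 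Where the direct lift genuinely fails is the strictly descending chain: pre-images of $q_1>q_2>\cdots$ need only form a \emph{bad} sequence (no $i<j$ with $p_i\le p_j$), not a descending chain, so the paper's ``it follows similarly'' quietly relies on the Ramsey-type fact that a bad sequence contains an infinite descending chain or an infinite antichain --- precisely the equivalence you establish up front. Thus your packaging is heavier than necessary for the antichain half (invoking Ramsey where a one-line lift suffices) but is airtight for the descending-chain half, where the paper's proof is at its thinnest; in the intended application the discrepancy is harmless anyway, since permutation classes contain no infinite strictly descending chains, as the paper itself notes.
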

\begin{proof}
Consider any antichain $A\subseteq Q$.  As $\varphi$ is onto, we may choose a pre-image for each $a\in A$.  Since $A$ is an antichain, the chosen pre-images must also form an antichain.  Therefore $A$ must be finite because $P$ is pwo.  It follows similarly that $Q$ has no infinite strictly descending chain, so $Q$ is pwo.
\end{proof}

In this note, we apply Proposition~\ref{prop-hom-pwo} to maps from subword orders to permutation classes.  Recall that given an alphabet $\Sigma$, the {\it subword order\/} (or, {\it scattered subword order\/} or {\it subsequence order\/}) on the set $\Sigma^*$ of all words (i.e., sequences) over $\Sigma$ is defined by $v\le w$ if $v$ occurs as a not-necessarily-contiguous subword (i.e., subsequence) of $w$.  The following result is due originally to Higman~\cite{higman:ordering-by-div:} but has been rediscovered many times since.

\newtheorem*{higmans-theorem}{Higman's Theorem}
\begin{higmans-theorem}
Let $\Sigma$ be a finite alphabet.  Under the subword order, $\Sigma^*$ is pwo.
\end{higmans-theorem}

Now suppose that $\M$ is a partial multiplication matrix with column signs $c_1,\dots,c_t$ and row signs $r_1,\dots,r_u$, and let $\Sigma=\{(k,\ell) : \M_{k,\ell}\neq0\}$.  We aim to define a map $\varphi:\Sigma^*\rightarrow\Grid(\M)$, by describing how to create an $\M$-gridded permutation, $\varphi(w)$, for each word $w\in\Sigma^*$, in which the letter $(k,\ell)$ corresponds to an entry in the $(k,\ell)$ cell.  In order to build this permutation, we read $w$ from left to right, inserting the entries in column $k$ from left-to-right if $c_k=1$ and from right-to-left if $c_k=-1$, while inserting the entries in row $\ell$ from bottom-to-top if $r_\ell=1$ and from top-to-bottom if $r_\ell=-1$.  The following chart summarizes the insertion procedure.
\begin{center}
\begin{tabular}{r|r|l}
$c_k$&$r_\ell$&order of insertion\\\hline
$1$&$1$&left-to-right and bottom-to-top\\
$1$&$-1$&left-to-right and top-to-bottom\\
$-1$&$1$&right-to-left and bottom-to-top\\
$-1$&$-1$&right-to-left and top-to-bottom
\end{tabular}
\end{center}
Note that the entries inserted into the cell $(k,\ell)$ are increasing if $\M_{k,\ell}=1$ and decreasing if $\M_{k,\ell}=-1$.

\begin{proposition}\label{prop-mono-grid-hom}
The map $\varphi:\Sigma^*\rightarrow\Grid(M)$ is order-preserving.
\end{proposition}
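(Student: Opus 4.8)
The plan is to reduce order-preservation to a local statement about pairs of letters. Since $v\le w$ in the subword order means exactly that $v$ occurs as a subsequence of $w$, I would write $v=w_{i_1}\cdots w_{i_m}$ with $i_1<\cdots<i_m$ and let $P_p$ denote the entry of $\varphi(w)$ created by the letter $w_{i_p}$. The goal is to show that the subsequence $P_1,\dots,P_m$ of $\varphi(w)$ is order isomorphic to $\varphi(v)$, since this is precisely the assertion that $\varphi(v)\le\varphi(w)$.

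The key observation, which I would establish first, is that the relative position of the two entries created by any two letters of a word depends only on those two letters and on which is read first, and on nothing else in the word. Consider two letters $(k,\ell)$ and $(k',\ell')$, the first read before the second. Their relative horizontal order (of indices) is fixed as follows: if $k\neq k'$ it is forced by the left-to-right ordering of the columns, since every entry of an earlier column lies to the left of every entry of a later column; if $k=k'$ the two entries lie in the same column and, by the insertion rule, their relative horizontal order is determined by the reading order together with $c_k$, the later-read entry lying to the right when $c_k=1$ and to the left when $c_k=-1$. A symmetric analysis, using the bottom-to-top ordering of the rows and the sign $r_\ell$ (resp.\ $r_{\ell'}$), fixes their relative vertical order (of values). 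The stability underlying this is that each insertion deposits its entry beyond all entries already present in its column and in its row, so later insertions never disturb the relative order of two entries already placed.

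With this in hand the proposition follows quickly. Fix two letters of $v$, the $p$-th and $q$-th with $p<q$; these are $w_{i_p}$ and $w_{i_q}$ with $i_p<i_q$, so they are identical letters in the two words and $w_{i_p}$ is read before $w_{i_q}$ in $w$ exactly as the $p$-th letter precedes the $q$-th in $v$. By the local observation, the entries $P_p,P_q$ have the same relative horizontal and vertical order in $\varphi(w)$ as the corresponding two entries have in $\varphi(v)$. As this holds for every pair, $P_1,\dots,P_m$ is order isomorphic to $\varphi(v)$, giving $\varphi(v)\le\varphi(w)$ and hence that $\varphi$ is order-preserving.

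I expect the only genuine obstacle to be pinning down the stability/locality claim precisely: one must verify that the insertion procedure always places a new entry at an extreme of its column and of its row relative to the entries already there, so that the relative order of any two previously placed entries is permanent. Once that is stated carefully, the case split on whether the two letters share a column or a row is routine, and the passage from a subword relation on words to pattern containment on permutations is immediate.
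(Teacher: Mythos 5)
Your proposal is correct and takes essentially the same approach as the paper, which disposes of this proposition in one sentence by noting that deleting a letter of $w$ yields a gridded permutation contained in $\varphi(w)$ --- an assertion resting on exactly the locality/stability of the insertion procedure that you verify explicitly. Your pairwise check (that the relative horizontal and vertical order of the entries created by two letters depends only on those letters, their reading order, and the signs $c_k$, $r_\ell$) is precisely the content the paper leaves implicit as ``clear,'' so your write-up is a valid, fully spelled-out version of the same argument.
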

\begin{proof}
It is clear that by deleting a letter of $w\in\Sigma^*$ we obtain a (gridded) permutation strictly contained in $\varphi(w)$, as desired.
\end{proof}

While we have defined $\varphi$ for all partial multiplication matrices, it cannot be onto when $\M$ is not a forest, because then $\Grid(\M)$ contains an infinite antichain.  However, as we show next, it is onto otherwise.

\begin{proposition}\label{prop-mono-grid-onto}
If the row-column graph of $\M$ is a forest then the map $\varphi:\Sigma^*\rightarrow\Grid(M)$ is onto.
\end{proposition}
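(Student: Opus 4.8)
The plan is to prove $\varphi$ onto by deciding, for an arbitrary $\M$-gridded permutation $\pi$, which words $w$ satisfy $\varphi(w)=\pi$, and then exhibiting one. Since the row-column graph of $\M$ is a forest, $\M$ is a partial multiplication matrix, so the column signs $c_1,\dots,c_t$ and row signs $r_1,\dots,r_u$ used to define $\varphi$ are available. Regard the entries of $\pi$ as points, each lying in some cell $(k,\ell)$. Reading the insertion procedure backwards, a word produces $\pi$ precisely when it inserts the entries in a linear order that, within each column $k$, proceeds by increasing $c_k\cdot(\text{position})$ and, within each row $\ell$, by increasing $r_\ell\cdot(\text{value})$. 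I encode these constraints as a directed graph $D$ on the entries: draw an arc $a\to b$ whenever $a,b$ share a column $k$ with $c_k\cdot(\text{position of }a)<c_k\cdot(\text{position of }b)$, or share a row $\ell$ with $r_\ell\cdot(\text{value of }a)<r_\ell\cdot(\text{value of }b)$. A topological order of $D$ lists the entries in an insertion order; reading off the cell labels gives a word $w$, and since positions across distinct columns and values across distinct rows are already fixed by the gridding, $\varphi(w)=\pi$. Hence it suffices to show that $D$ is acyclic.

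The only local fact required is that inside a single cell $(k,\ell)$ the column constraint and the row constraint coincide. This is forced by $\M_{k,\ell}=c_kr_\ell$: if $\M_{k,\ell}=1$ the cell is increasing and $c_k=r_\ell$, while if $\M_{k,\ell}=-1$ the cell is decreasing and $c_k=-r_\ell$, and in both cases a brief sign check shows that increasing $c_k\cdot(\text{position})$ is the same order as increasing $r_\ell\cdot(\text{value})$ on that cell's entries. In particular $D$ restricts to a strict total order on each cell, so $D$ has no $2$-cycles.

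For acyclicity I argue by contradiction: suppose $D$ has a directed cycle and fix one of minimum length. If two consecutive entries of the cycle lie in one cell, then the entry before them shares a row or column with both, and the local fact lets me replace the length-two step through the middle entry by a single arc of $D$ (or, should this collapse the cycle, it gives an outright contradiction); thus in a shortest cycle consecutive entries lie in distinct cells. Similarly, two consecutive arcs through the same column, or through the same row, compose by transitivity, so a shortest cycle alternates column arcs with row arcs. Now the cells of $\M$ are exactly the edges of the row-column graph $G$, with $(k,\ell)$ the edge $x_k\sim y_\ell$; consecutive entries of the cycle share a row or column, so their cells are distinct edges of $G$ meeting at a vertex, and following these shared vertices around the cycle traces a closed walk in $G$ that traverses all of these edges. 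Because $G$ is a forest, this closed walk must backtrack, that is, two of its consecutive edges coincide -- equivalently two consecutive cells of the cycle are equal, contradicting the conclusion that consecutive cells are distinct. Therefore $D$ has no directed cycle, and so $D$ admits a topological order, which completes the argument.

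The main obstacle is this last step: translating a hypothetical shortest cycle in $D$ into a closed walk in $G$ with no immediate backtrack, and recognizing that the forest hypothesis forbids exactly such a walk. The sign check of the second paragraph, although routine, is what guarantees that the same-cell shortcut always respects the orientation of the arcs, and it is therefore indispensable to the reduction that sets up the contradiction.
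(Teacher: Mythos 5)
Your proposal is correct and takes essentially the same approach as the paper: your digraph $D$ is precisely the union of the paper's column orders $\le^{\rm col}_k$ and row orders $\le^{\rm row}_\ell$, the same-cell agreement forced by $\M_{k,\ell}=c_kr_\ell$ rules out $2$-cycles exactly as there, and your topological order of $D$ is the paper's linear extension, from which the word is read off identically. Your minimum-length-cycle and non-backtracking-walk analysis is simply a more detailed rendering of the step the paper states briefly, namely that any longer cycle in the union of these orders would force a cycle in the row-column graph, contradicting the forest hypothesis.
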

\begin{proof}
Suppose that the row-column graph of the $t\times u$ $\zpm$ matrix $\M$ is a forest.  Since this implies that $\M$ is a partial multiplication matrix, we may select row and column signs $c_1,\dots,c_t$ and $r_1,\dots,r_u$ for $\M$.  Now take $\pi\in\Grid(\M)$ of length $n$.

We would like to describe a word $w\in\Sigma^*$ such that $\varphi(w)=\pi$.  We begin by fixing an $\M$-gridding of $\pi$ and, for each $k\in[t]$, we define the linear {\it column order\/} $\le^{\rm col}_{k}$ on the entries of $\pi$ in column $k$ by
\begin{eqnarray*}
\pi(i)\le^{\rm col}_{k}\pi(j)&\iff&
\left\{\begin{array}{l}
\mbox{$i\le j$ if $c_k=1$, or}\\
\mbox{$i\ge j$ if $c_k=-1$}
\end{array}\right.
\end{eqnarray*}
In other words, $\pi(i)\le^{\rm col}_{k}\pi(j)$ if the letter corresponding to $\pi(i)$ should occur before the letter corresponding to $\pi(j)$ in our construction of $\varphi(w)$.  Similarly, for each $\ell\in[u]$, we define the linear {\it row order\/} $\le^{\rm row}_{\ell}$ on the entries of $\pi$ in row $\ell$ by
\begin{eqnarray*}
\pi(i)\le^{\rm row}_{\ell}\pi(j)&\iff&
\left\{\begin{array}{l}
\mbox{$\pi(i)\le\pi(j)$ if $r_\ell=1$, or}\\
\mbox{$\pi(i)\ge\pi(j)$ if $r_\ell=-1$}
\end{array}\right.
\end{eqnarray*}

Recall that a set of linear orders is {\it consistent\/} if their union forms a poset, which occurs if and only if their union does not contain a cycle.  Clearly the union of row and column orders just described cannot have a cycle of length two: having both $\pi(i)\le^{\rm col}_{k}\pi(j)$ and $\pi(j)\le^{\rm row}_{\ell}\pi(i)$ would contradict the fact that $\M$ is a partial multiplication matrix.  Thus we need only worry about longer cycles.  If $\pi(h)\le^{\rm col}_{k}\pi(i)\le^{\rm row}_{\ell}\pi(j)$ or $\pi(h)\le^{\rm row}_{\ell}\pi(i)\le^{\rm col}_{k}\pi(j)$ then $\M_{k,\ell}\neq 0$.  Therefore, if the union of these linear orders contains a cycle of length greater than two then the row-column graph of $\M$ must contain a cycle, a contradiction.

Because the union of row and column orders is consistent, this union has a linear extension, say $\pi(i_1)\le_*\pi(i_2)\le_*\cdots\le_*\pi(i_n)$.  The word $w\in\Sigma^*$ in which the $j$th letter is equal to the cell of $\pi(i_j)$ satisfies $\varphi(w)=\pi$, verifying that $\varphi$ is onto.
\end{proof}

\begin{corollary}
If the row-column graph of $\M$ is a forest, then $\Grid(\M)$ is pwo.
\end{corollary}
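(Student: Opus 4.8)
The plan is to assemble the machinery developed above into a single application of Proposition~\ref{prop-hom-pwo}. Assume that the row-column graph of $\M$ is a forest. As noted at the start of Section~\ref{grid-pwo-part-mult}, this forces $\M$ to be a partial multiplication matrix, so I may fix row signs $r_1,\dots,r_u$ and column signs $c_1,\dots,c_t$ and form the alphabet $\Sigma=\{(k,\ell):\M_{k,\ell}\neq 0\}$. These choices are precisely what is needed to define the map $\varphi:\Sigma^*\rightarrow\Grid(\M)$.

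First I would observe that $\Sigma^*$ is pwo: the alphabet $\Sigma$ is finite, being a subset of the finite index set $[t]\times[u]$, so Higman's Theorem applies directly. Next I would invoke Proposition~\ref{prop-mono-grid-hom} to conclude that $\varphi$ is order-preserving, and Proposition~\ref{prop-mono-grid-onto} --- which is where the forest hypothesis is consumed --- to conclude that $\varphi$ is onto. At this point all three hypotheses of Proposition~\ref{prop-hom-pwo} are in hand, with $P=\Sigma^*$ and $Q=\Grid(\M)$, and that proposition yields that $\Grid(\M)$ is pwo, completing the argument.

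There is no genuine obstacle remaining at the level of the corollary itself; the substantive work has already been carried out in the preceding propositions, particularly in establishing surjectivity via the consistency of the row and column orders. If anything, the only point deserving a moment's care is confirming that each hypothesis of Proposition~\ref{prop-hom-pwo} is actually met --- in particular that $\Sigma$ is finite so that Higman's Theorem is applicable, and that the forest condition is exactly the ingredient that upgrades the order-preserving map $\varphi$ to a surjection. Both are immediate given the results above, so the corollary follows simply by chaining them together.
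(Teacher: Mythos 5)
Your proposal is correct and follows exactly the paper's argument: the paper's proof is a one-line appeal to Propositions~\ref{prop-hom-pwo}--\ref{prop-mono-grid-onto} together with Higman's Theorem, and you have simply spelled out the same chain (Higman gives that $\Sigma^*$ is pwo, Proposition~\ref{prop-mono-grid-hom} gives order-preservation, Proposition~\ref{prop-mono-grid-onto} uses the forest hypothesis for surjectivity, and Proposition~\ref{prop-hom-pwo} transfers pwo to $\Grid(\M)$). Your added checks, such as the finiteness of $\Sigma$, are correct and harmless elaborations of the same proof.
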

\begin{proof}
This follows immediately from Propositions~\ref{prop-hom-pwo}--\ref{prop-mono-grid-onto} and Higman's Theorem.
\end{proof}

\section{An Example}

\begin{figure}
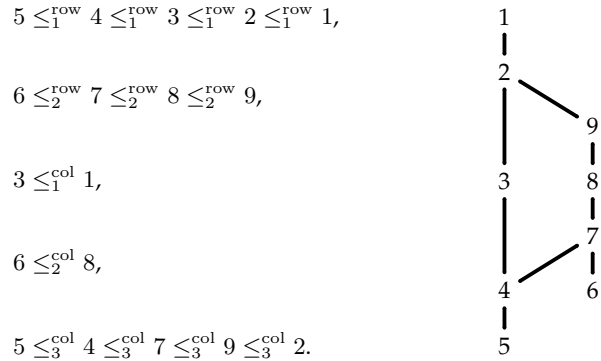

\begin{footnotesize}
\begin{center}
\begin{tabular}{ccc}
\psset{rowsep=0.304in}
\psset{mcol=l}
\begin{psmatrix}
$5\le^{\rm row}_1 4\le^{\rm row}_1 3\le^{\rm row}_1 2\le^{\rm row}_1 1$,\\
$6\le^{\rm row}_2 7\le^{\rm row}_2 8\le^{\rm row}_2 9$,\\
$3\le^{\rm col}_1 1$,\\
$6\le^{\rm col}_2 8$,\\
$5\le^{\rm col}_3 4\le^{\rm col}_3 7\le^{\rm col}_3 9\le^{\rm col}_3 2$.
\end{psmatrix}
&\rule{0.5in}{0pt}&
\psset{linewidth=0.02in}
\psset{rowsep=0.2in}
\psset{colsep=0.4in}
\psset{nodesep=0.05in}
\begin{psmatrix}
1\\
2\\
&9\\
3&8\\
&7\\
4&6\\
5
\ncline{c-c}{1,1}{2,1}
\ncline{c-c}{2,1}{3,2}
\ncline{c-c}{2,1}{4,1}
\ncline{c-c}{3,2}{4,2}
\ncline{c-c}{4,2}{5,2}
\ncline{c-c}{4,1}{6,1}
\ncline{c-c}{5,2}{6,1}
\ncline{c-c}{5,2}{6,2}
\ncline{c-c}{6,1}{7,1}
\end{psmatrix}
\end{tabular}
\end{center}
\end{footnotesize}
\caption{On the left, the row and column orders for the gridding of the permutation $136854792$ shown in Figure~\ref{fig-graph-grid-136854792}, on the right, Hasse diagram of the union of these orders.}
\label{fig-orders-136854792}
\end{figure}

In order to illustrate our proof of Proposition~\ref{prop-mono-grid-onto}, we describe the map
$$
\varphi:\{(1,1),(2,2),(3,1),(3,2)\}^*\rightarrow\Grid\left(\begin{array}{ccc}&1&1\\1&&-1\end{array}\right)
$$
and show that the gridding of the permutation $136854792$ shown in Figure~\ref{fig-graph-grid-136854792} lies in the image of this map.  There are two possible choices of row and column signs for this matrix; choosing $r_1=-1$, $r_2=1$, $c_1=-1$, $c_2=c_3=1$ allows us to describe $\varphi$ recursively by the following rules:
\begin{itemize}
\item upon reading a $(1,1)$, insert a new minimum element to the left of $\pi$,
\item upon reading a $(2,2)$, insert a new maximum element before all entries corresponding to $(3,1)$s and $(3,2)$s,
\item upon reading a $(3,1)$, insert a new minimum element to the right of $\pi$,
\item upon reading a $(3,2)$, insert a new maximum element to the right of $\pi$.
\end{itemize}

In order to construct a word which maps to the gridding of $136854792$ shown in Figure~\ref{fig-graph-grid-136854792}, we begin by constructing the row and column orders.  For example, because $r_1=-1$, the first row order ($\le^{\rm row}_1$) records the entries in the first row ($1$, $2$, $3$, $4$, and $5$) from top-to-botton.  All five such row and column orders are shown on the left of Figure~\ref{fig-orders-136854792}.  Because we began with an acyclic matrix, we are guaranteed that the union of these orders is consistent; this union is depicted on the right of Figure~\ref{fig-orders-136854792}.  Finally, in order to construct a word which is mapped to this permutation, we may choose any linear extension of this poset.  By choosing the linear extension
$$
5\le_*4\le_*6\le_*7\le_*3\le_*8\le_*9\le_*2\le_*1
$$
and recording the cells of each of these entries, we obtain the word
$$
(3,1)\ (3,1)\ (2,2)\ (3,2)\ (1,1)\ (2,2)\ (3,2)\ (3,1)\ (1,1).
$$

\section{Conclusion}

The main result of this note is the existence of order-preserving maps from subword posets to monotone grid classes of partial multiplication matrices.  For monotone grid classes of forests, we have shown that these map are onto, therefore giving a new proof that these permutation classes are partially well-ordered.  One of the more intriguing questions arising from this work is how far similar approaches could be extended:

\begin{question}\label{question-subword-like}
For which permutation classes $\C$ does there exist an onto order-preserving map $\varphi:\Sigma^*\rightarrow\C$ for some finite alphabet $\Sigma$?
\end{question}

Let us say that such classes are {\it subword-like\/}.  While all monotone grid classes of forests are subword-like, there are other subword-like classes.  Two such examples are the class of permutations that can be drawn on a circle, studied by Vatter and Waton~\cite{vatter:on-points-drawn:}, and the permutations that can be drawn on an $\textsf{X}$, examined by Elizalde~\cite{elizalde:the-x-class-and:}.

\bibliographystyle{acm}
\bibliography{../refs}

\def\cprime{$'$}
\begin{thebibliography}{1}

\bibitem{brignall:grid-classes-an:}
{\sc Brignall, R.}
\newblock Grid classes and partial well order.
\newblock arXiv:0906.3723 [math.CO].

\bibitem{elizalde:the-x-class-and:}
{\sc Elizalde, S.}
\newblock The {$\mathcal{X}$}-class and almost-increasing permutations.
\newblock arXiv:0710.5168 [math.CO].

\bibitem{higman:ordering-by-div:}
{\sc Higman, G.}
\newblock Ordering by divisibility in abstract algebras.
\newblock {\em Proc. London Math. Soc. (3) 2\/} (1952), 326--336.

\bibitem{murphy:profile-classes:}
{\sc Murphy, M.~M., and Vatter, V.}
\newblock Profile classes and partial well-order for permutations.
\newblock {\em Electron. J. Combin. 9}, 2 (2003), Research paper 17, 30 pp.

\bibitem{vatter:small-permutati:}
{\sc Vatter, V.}
\newblock Small permutation classes.
\newblock arXiv:0712.4006v2 [math.CO].

\bibitem{vatter:on-points-drawn:}
{\sc Vatter, V., and Waton, S.}
\newblock On points drawn from a circle.
\newblock Preprint.

\end{thebibliography}

\end{document}